%
%
%
%
%
%
%
%
%
%
%

\documentclass[11pt, reqno]{amsart}
\usepackage{version}
\usepackage{amssymb}
\setlength{\textheight}{20cm} \textwidth16cm \hoffset=-2truecm

\newtheorem{theorem}{Theorem}[section]
\newtheorem{lemma}[theorem]{Lemma}

\theoremstyle{definition}

\newtheorem{remark}[theorem]{Remark}

\numberwithin{equation}{section}

%
%
%
%

\begin{document}
\title[]{Extensions to the boundary of Riemann maps on varying domains in the complex plane}

\author{Jan Pel}
\author{Han Peters}
\author{Erlend F. Wold}

\address{E. F. Wold: Department of Mathematics\\
University of Oslo\\
Postboks 1053 Blindern, NO-0316 Oslo, Norway}\email{erlendfw@math.uio.no}

\address{H. Peters: Korteweg de Vries Institute for Mathematics\\
University of Amsterdam \\
P.O. Box 94248 \\
1090 GE Amsterdam, Netherlands}\email{h.peters@uva.nl}
%
%
\subjclass[2010]{32E20}
\date{\today}
\keywords{}


\maketitle

\begin{abstract}
We give a short proof of the convergence to the boundary of Riemann maps on varying domains.
Our proof provides a uniform approach to several ad-hoc constructions that have recently appeared in the literature.
\end{abstract}

\section{Introduction}

In the following $\triangle$ denotes the unit disk in $\mathbb C$, and $\triangle_r(z_0)$ denotes
the disk of radius $r$ centred at the point $z_0$.

\begin{theorem}\label{main}
Let $\Gamma\subset\partial\triangle$ be an open interval, let $U\subset\mathbb C$ be an
open neighbourhood of $\overline\Gamma$, and suppose that $\{D_j\}_{j=1}^\infty$ is
a sequence of simply connected domains containing $\triangle$, each bounded by a closed Jordan curve
and contained in $\triangle_R$ for some fixed $R>0$,
and assume that $D_j\cap U=\triangle\cap U$ for all $j$.   Assume further
that $D_j\rightarrow\triangle$ with respect to kernel convergence, and
for each $j$ let $f_j:D_j\rightarrow\triangle$ be the Riemann map with
$f_j(0)=0$ and $f_j'(0)>0$.   Then $f_j\rightarrow\mathrm{id}$
uniformly on compact subsets of $\triangle\cup\Gamma$.
\end{theorem}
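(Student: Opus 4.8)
The plan is to combine the classical interior statement with a reflection argument across the fixed boundary arc. First I would record the two standing facts. By the Carath\'eodory kernel theorem applied to the base point $0$, kernel convergence $D_j\to\triangle$ forces the inverse maps $g_j=f_j^{-1}:\triangle\to D_j$ (normalised by $g_j(0)=0$, $g_j'(0)>0$) to converge to the identity locally uniformly on $\triangle$, and hence $f_j\to\mathrm{id}$ uniformly on compact subsets of the \emph{open} disk. Second, the hypothesis $D_j\cap U=\triangle\cap U$ implies that $\partial D_j\cap U=\partial\triangle\cap U$; in particular the circular arc $A:=\partial\triangle\cap U$, which contains $\overline\Gamma$, lies in $\partial D_j$ for every $j$. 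Since each $D_j$ is a Jordan domain, Carath\'eodory's extension theorem makes $f_j$ a homeomorphism of $\overline{D_j}$ onto $\overline\triangle$, and its restriction to the analytic arc $A$ takes values in $\partial\triangle$.

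Given this, I would reflect. Fix a compact subarc $\gamma\Subset\Gamma$ and a thin two-sided tubular neighbourhood $W\subset U$ of $\gamma$, split by the circle into an inner part $W\cap\triangle\subset D_j$ and an outer part. Writing $\sigma(z)=1/\bar z$ for inversion in the unit circle, the Schwarz reflection principle (applicable because $f_j$ is continuous up to $A$ and maps it into the unit circle) extends each $f_j$ holomorphically to $W$ by $\tilde f_j(z)=1/\overline{f_j(\sigma(z))}$ on the outer part. The crux of the proof, and the step I expect to be the main obstacle, is a bound on $\tilde f_j$ near $\gamma$ that is uniform in $j$. I would obtain it from potential theory: since $f_j:D_j\to\triangle$ is conformal with $f_j(0)=0$, conformal invariance of the Green function gives $-\log|f_j(w)|=g_{D_j}(w,0)$, and the inclusion $D_j\subset\triangle_R$ together with monotonicity of the Green function yields $g_{D_j}(w,0)\le g_{\triangle_R}(w,0)=\log(R/|w|)$. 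Hence $|f_j(w)|\ge |w|/R$ for all $w\in D_j$ and all $j$; in particular $|f_j(w)|\ge 1/(2R)$ once $|w|\ge 1/2$. Shrinking $W$ so that $|\sigma(z)|\ge 1/2$ there, this forces $|\tilde f_j|\le 2R$ on the outer part, while $|\tilde f_j|\le 1$ on the inner part, so $\{\tilde f_j\}$ is uniformly bounded on $W$.

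Finally I would run a normal families argument on $W$. By Montel's theorem $\{\tilde f_j\}$ is normal, and any locally uniform subsequential limit $F$ is holomorphic on $W$. On the inner part $W\cap\triangle$ the interior convergence from the first paragraph gives $F=\mathrm{id}$, so $F=\mathrm{id}$ on all of $W$ by the identity theorem. As every subsequential limit equals the identity, the whole sequence $\tilde f_j\to\mathrm{id}$ locally uniformly on $W$, hence $f_j\to\mathrm{id}$ uniformly on $\gamma$ and on a neighbourhood of it. A given compact $K\subset\triangle\cup\Gamma$ meets $\partial\triangle$ in a compact subset of $\Gamma$, which I cover by finitely many such arcs $\gamma$; combining these with the interior convergence on the remaining compact part of $K\cap\triangle$ yields uniform convergence $f_j\to\mathrm{id}$ on $K$, completing the proof.
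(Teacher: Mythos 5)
Your proof is correct, but it follows a genuinely different route from the paper's. The paper never reflects across the arc. Instead it first proves equicontinuity of $\{f_j\}$ on compact subsets of $\Gamma$ by contradiction: if boundary arcs $\gamma_j\subset\Gamma$ shrinking to a point $e^{i\theta}$ had images of length at least some fixed $\epsilon$, then a Lindel\"of-type maximum principle (a two-constants theorem, stated and proved as the paper's only lemma), applied to the uniformly bounded inverse maps $g_j-e^{i\theta}$ on disks $\triangle_r(x)$ centred near the limit point whose boundary circles stick out of $\overline\triangle$, would force $g_j\to e^{i\theta}$ locally uniformly on $\triangle$, contradicting $g_j\to\mathrm{id}$ from kernel convergence; it then extracts a Cauchy subsequence on $K\subset\Gamma$ (Arzel\`a--Ascoli) and applies the same lemma a second time to propagate the boundary Cauchy estimates into a one-sided neighbourhood of $K$ in $\triangle$, where interior convergence identifies the limit as $\mathrm{id}$. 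Your reflection-plus-Montel argument replaces both applications of Lindel\"of's lemma; the one new ingredient it requires, a $j$-uniform bound on the reflected maps, you obtain correctly from Green's function monotonicity, though the Schwarz lemma gives it in one line: $g_j=f_j^{-1}$ maps $\triangle$ into $D_j\subset\triangle_R$ with $g_j(0)=0$, so $|g_j(z)|\le R|z|$, i.e.\ $|f_j(w)|\ge |w|/R$, which also guarantees the non-vanishing of $f_j(\sigma(z))$ that your formula $\tilde f_j=1/\overline{f_j\circ\sigma}$ needs (and $W$ should be taken connected and $\sigma$-symmetric, as your construction implicitly does). The trade-off between the two approaches: yours exploits the analyticity of the arc, since reflection is only available across analytic arcs, and in return yields more, namely locally uniform convergence of holomorphic extensions on a full two-sided neighbourhood of compact subsets of $\Gamma$, whence $C^k$-convergence there for every $k$ by Cauchy estimates, a conclusion the paper's closing remark reaches only via ``additional arguments''; the paper's Lindel\"of argument, by contrast, uses nothing about the common boundary piece beyond the stated hypotheses, and so adapts directly to the rougher, one-sided generalizations sketched in that remark.
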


A special case of Theorem \ref{main} appeared in \cite{ForstnericWold2009} as an ingredient in the proof of the fact
that any bordered Riemann surface in $\mathbb C^2$ embeds properly into $\mathbb C^2$.  It appeared again later in \cite{DiederichFornaessWold} and \cite{DengFornaessWold} as ingredients in techniques for globally exposing points
on certain Stein compacts in $\mathbb C^n$ and in Stein spaces,  and recently yet another improvement appeared
in \cite{Forstneric}, as an ingredient in the construction of certain exotic proper embeddings of the unit disk into $\mathbb B^2$.
However, in all cases, the sequences $D_j$ are ad hoc constructions intended for very specific purposes, and
all of them are obtained by adding to $D$ certain shrinking tubes around special arcs attached to $D$.   The proofs in these special cases do not adapt to the general setting of Theorem \ref{main}.

\begin{remark}
This result holds in much more generality.   For instance, if $\triangle$ in the theorem is replaced by
any simply connected Jordan domain $D$, and if $\Gamma\subset\partial D$ is a Jordan arc, elementary
arguments involving Riemann mappings reduce the situation to the setting of the theorem.   Inspecting the
proof below also makes it clear that one may consider much more general domains, whose boundaries
only contain a common Jordan arc, and without the assumption imposed by $U$, but one might have to consider
one sided convergence. One may also prove similar results for domains $D_j$ which are not simply connected, but which
are all conformally equivalent to a given domain $D$. By introducing additional arguments one
can get convergence in $C^k$-norm, granted that the arc $\Gamma$ (in the general case) is of
class $C^{k,\alpha}$ for some $\alpha>0$.
\end{remark}

\section{Proof of Theorem \ref{main}}

It is well known that each $f_j$ extends to a homeomorphism between the closures of the domains.  We claim first that for any compact
subset $K\subset\Gamma$ the family $f_j|_K$ is equicontinuous.   If not, then by passing to a subsequence we may assume that there
is a sequence of shrinking intervals $\gamma_j\subset\Gamma$ with end points $a_j,b_j \rightarrow e^{i\theta}$ for which $f_j(\gamma_j)\subset\partial\triangle$
has length greater than some fixed $\epsilon>0$ for all $j$. Consider the sequence of inverse maps $g_j=f_j^{-1}$.
After composing with a rotation, we may assume that $f_j(\gamma_j)\supset\{e^{i\alpha}:-\delta<\alpha<\delta\}=:\eta$
for some $\delta>0$.  Choose $0<x_0<1$ and $r>0$ such that for all $x_0<x<1$ we have that
$\triangle_r(x)\cap\partial\triangle\subset\eta$.
Now apply Lemma \ref{Lind} below to the bounded functions $g_j - e^{i\theta}$, defined on the unit disk $D = \triangle$ and with $z_0 = x$. It follows that $g_j(x)-e^{i\theta}\rightarrow 0$ for all $x$, from which it follows that $g_j \rightarrow e^{i\theta}$ uniformly on compacts in $\triangle$.
However, by kernel convergence $g_j\rightarrow \mathrm{id}$ uniformly on compacts in $\triangle$, giving a contradiction. \

After passing to a subsequence we may assume that $f_j|_K$ is a Cauchy sequence.
By an argument similar to the one just given,
it follows from Lemma \ref{Lind} that $f_j$ is a Cauchy sequence on an extension of $K$ into $\triangle$.
The result follows from the fact that $f_j\rightarrow\mathrm{id}$ uniformly on compact sets in $\triangle$.
$\hfill\square$

\section*{Lindel\"{o}fs maximum prinicple}

We recall with a proof the following result (see e.g. Goluzin \cite{Goluzin}, page 33).

\begin{lemma}(Lindel\"{o}f)\label{Lind}
Let $D\subset\mathbb C$ be a domain, let $z_0\in D$, and suppose for $r>0$ and $m\in\mathbb N$,
there is an interval $I\subset \partial\triangle_r(z_0)$ of length at least $\frac{2\pi r}{m}$, such that $I\subset\mathbb C\setminus\overline D$.  Then for any $f\in\mathcal O(D)$ with $|f(z)|\leq M$ for all $z\in D$,
satisfying
$$
\limsup_{(\triangle_r(z_0)\cap D)\ni z\rightarrow \partial D} |f(z)|<\epsilon,
$$
we have that $|f(z_0)|<(\epsilon M^{m-1})^{1/m}$.
\end{lemma}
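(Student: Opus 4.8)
The plan is to reduce the estimate to a single application of the maximum principle on a rotationally symmetric auxiliary domain, following Goluzin. After translating so that $z_0$ becomes the origin, write $\tilde f(\zeta)=f(\zeta+z_0)$ on $\tilde D:=D-z_0$, and set $\omega=e^{2\pi i/m}$. First I would form the product
\[
\Phi(\zeta)=\prod_{k=0}^{m-1}\tilde f(\omega^k\zeta),
\]
defined on the open set $\Omega=\triangle_r\cap\bigcap_{k=0}^{m-1}\omega^{-k}\tilde D$, i.e.\ the set of $\zeta\in\triangle_r$ for which every rotate $\omega^k\zeta$ lies in $\tilde D$. Since $\omega^m=1$ this set is invariant under multiplication by $\omega$, and $0\in\Omega$ because $z_0\in D$. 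The point of the construction is that $\Phi$ is holomorphic and bounded by $M^m$ on $\Omega$, while $\Phi(0)=\tilde f(0)^m=f(z_0)^m$, so a bound $|\Phi(0)|\le\epsilon M^{m-1}$ yields exactly the claimed inequality after taking $m$-th roots.

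Next I would analyse the boundary $\partial\Omega$. A boundary point is either (a) a point where some rotate $\omega^k\zeta$ reaches $\partial\tilde D$ inside the open disk $\triangle_r$, or (b) a point on the circle $\{|\zeta|=r\}$. In case (a) the corresponding factor $\tilde f(\omega^k\zeta)$ tends, in the $\limsup$ sense, to a value $<\epsilon$ while the remaining factors stay bounded by $M$, so $\limsup|\Phi|\le\epsilon M^{m-1}$ there. The key observation for case (b) is that the hypothesis $|I|\ge 2\pi r/m$ forces the $m$ rotated arcs $\omega^k I$ to cover the entire circle $\partial\triangle_r$; hence any $\zeta$ with $|\zeta|=r$ has some rotate $\omega^{-k}\zeta\in I\subset\mathbb C\setminus\overline{\tilde D}$. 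Because $\mathbb C\setminus\overline{\tilde D}$ is open, no such $\zeta$ can be a limit of points of $\Omega$ unless it sits at one of the finitely many shared endpoints of the arcs $\omega^k I$. Thus $\partial\Omega$ meets the circle in at most finitely many points.

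Finally I would invoke the maximum principle for the subharmonic function $\log|\Phi|$, which is bounded above by $m\log M$ on the bounded domain $\Omega$ (restricting to the component containing $0$). By the two previous steps, $\limsup_{\zeta\to\xi}\log|\Phi(\zeta)|\le\log(\epsilon M^{m-1})$ for every $\xi\in\partial\Omega$ outside a finite, hence polar, set, and such a set is negligible for a subharmonic function that is bounded above. Therefore $\log|\Phi(0)|\le\log(\epsilon M^{m-1})$, that is $|f(z_0)|\le(\epsilon M^{m-1})^{1/m}$; the strict inequality follows from the strong maximum principle together with the strict bound $\limsup|f|<\epsilon$ on the nonempty part of $\partial\Omega$ coming from $\partial\tilde D$ (nonempty since $0\in\tilde D$ but $I$ lies outside $\overline{\tilde D}$, forcing $\partial\tilde D$ to enter $\triangle_r$). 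I expect the only genuinely delicate point to be the boundary bookkeeping in case (b): one must verify that the rotated arcs truly cover $\partial\triangle_r$ and that the exceptional set on the circle is finite, so that it does not obstruct the maximum principle, rather than a positive-length arc on which only the crude bound $|\Phi|\le M^m$ is available.
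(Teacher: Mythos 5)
Your proof is correct and takes essentially the same route as the paper: the classical rotation trick, forming the product $\prod_{k}\tilde f(\omega^k\zeta)$ on a rotation-invariant intersection of rotated copies of the domain and concluding with the maximum principle at $0$. If anything, your write-up is more careful than the paper's terse version, since you explicitly intersect with $\triangle_r$, dispose of the finitely many possible boundary points on the circle via the polar-set form of the maximum principle, and address the strictness of the final inequality.
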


\begin{proof}
Without loss of generality we may assume that $z_0=0$.  Consider the domain
\begin{equation}
\tilde D := \bigcap_{k=0}^{m-1}(e^{2\pi ik/m}\cdot D),
\end{equation}
the intersection of $m$ rotated copies of $D$. Then $\tilde D$ is invariant under rotation by the angle $2\pi/m$,
and is strictly contained in $\triangle_r(0)$. Define the function $\tilde f$ on $\tilde D$ by
\begin{equation}
\tilde f(z) := \prod_{k=0}^{m-1} f(e^{2\pi ik/m}\cdot z).
\end{equation}
Then we have that
\begin{equation}
\underset{\tilde D\ni z \rightarrow\partial\tilde D}{\limsup}|f(z)|\leq M^{m-1}\cdot\epsilon,
\end{equation}
and so by the maximum principle, $|\tilde f(0)|=|f(0)|^m\leq M^{m-1}\cdot\epsilon$.
\end{proof}

\bibliographystyle{amsplain}

\end{document}